\def\tr{\mathop{\rm tr}\nolimits}
\def\erf{\mathop{\rm erf}\nolimits}
\def\Cov{\mathop{\rm Cov}\nolimits}
\def\Var{\mathop{\rm Var}\nolimits}
\def\Vec{\mathop{\rm vec}\nolimits}
\def\Vech{\mathop{\rm vech}\nolimits}
\def\v{\mathop{\rm v}\nolimits}
\def \E {\mathop{\rm E}\nolimits}
\def \P {\mathop{\rm P}\nolimits}
\def \build#1#2#3{\mathrel{\mathop{#1}\limits^{#2}_{#3}}}
\newtheorem{thm}{Theorem}[section]
\newtheorem{lem}{Lemma}[section]
\theoremstyle{definition}
\newtheorem{rem}{Remark}[section]
\newcommand{\ams}[2]
           {\begin{center}
            \begin{minipage}{5.25in}
            \small
            \noindent \textbf{Mathematics Subject Classification: }{\uppercase{#1}}
            \end{minipage}
            \end{center}
            \par\normalsize
           }
\newcommand{\keywords}[1]
           {\begin{center}
            \begin{minipage}{5.25in}
            \small
            \noindent \textbf{Key Words:}~{\textrm{#1}}
            \end{minipage}
            \end{center}
            \normalsize
           }
\title{\vspace{-2.5cm}\textbf{\Large A modified Prékopa's  approach in optimum allocation in multivariate stratified random sampling}}
\author{
  \begin{normalsize}
  \begin{tabular}{c}
    \textbf{ Jos\'e A. D\'\i az-Garc\'\i a} \\
    Department of Statistics and Computation \\
    Universidad Aut\'onoma Agraria Antonio Narro \\
    25350 Buenavista, Saltillo, Coahuila, M\'exico. \\
    jadiaz@uaaan.mx \\
    and\\
    \textbf{Rogelio Ramos-Quiroga}\\
    Centro de Investigaci\'on en Matem\'aticas\\
    Department of Probability and Statistics\\
    Callej\'on de Jalisco s/n.\\
    36240 Guanajuato, M\'exico\\
    rramosq@cimat.mx\\
  \end{tabular}
  \end{normalsize}
}
\date{}
\begin{document}
  \maketitle
\begin{abstract}
\noindent A modified Pr\'ekopa's approach is considered for the problem of optimum allocation
in multivariate stratified random sampling. An example is solved by applying the proposed
methodology.
\end{abstract}

\keywords{Multivariate stratified random sampling, stochastic programming, optimum allocation,
integer programming, chance constraints.}

\ams{62D05, 90C15, 90C29, 90C10}

\section{Introduction}\label{sec1}

One of the statistical tools most commonly used in many fields of scientific research is the
theory of probabilistic sampling. In diverse practical situations, the probabilistic model of
stratified random sampling is frequently applied. Although there are different ways to
allocate the sample in strata, the optimum allocation has been found to be a useful approach,
see \citep{s54}, \cite{coc77}, \cite{sssa84} and \cite{t97}.

From a multivariate point of view, there are, basically, two approaches for solving the problem of optimum
allocation in multivariate stratified random sampling. When a cost function is defined as the objective function
subject to certain functions of variances to be within a given region, the problem of the optimum allocation
in multivariate stratified random sampling is stated as a deterministic uniobjective
mathematical programming problem, see \citet{ad81} among others. Alternatively, when the
objective function is defined as some functions of variances subject to cost restrictions, the
problem has been proposed implicitly and explicitly as a deterministic multiobjective
mathematical programming problem, see \cite{coc77}, \cite{sssa84} and \citet{dgu:08}.

On the other hand, \citet{pre78} considers the approach wherein population variances are random
variables and formulated the corresponding optimum allocation problem as a stochastic (or
probabilistic) mathematical programming problem, termed specifically chance constraints
approach, see \citet{chc:63}. Namely, \citet{pre78} minimizes a cost function subject to
inequality restrictions in terms of the estimated variance of the stratified mean of each
characteristic, restrictions that are allowed to be violated with certain probability. An
alternative approach suggested by \citet{dgu:08} is developed by \citet{kw:10} from a
stochastic point of view.

This work states the optimum allocation in multivariate stratified random sampling as a
stochastic integer programming problem, specifically, a modified Pr\'ekopa's
approach is proposed. Section \ref{sec2} includes some notation and definitions on multivariate
stratified random sampling and summarizes properties on the asymptotic normality of the sample
covariance matrices. The optimum allocation in multivariate stratified random sampling via
chance constraints methodology is studied in Section \ref{sec3}. Finally an application of the
approach is presented in Section \ref{sec4}.

\section{Preliminaries on multivariate stratified random sampling}\label{sec2}

Consider a population of size $N$, divided into $H$ sub-populations (strata). We wish to find a
representative sample of size $n$ and an optimum allocation rule for the strata, meeting the following
requirements: i) to minimize the variance of the estimated mean, subject to a budgetary
constraint; or ii) to minimize the cost subject to a constraint on the variances; this is the
classical problem in optimum allocation in univariate stratified sampling, see \cite{coc77},
\cite{sssa84} and \cite{t97}. However, if more than one characteristic (variable) is being considered,
then the problem is known as optimum allocation in multivariate stratified sampling. For a
formal expression of the problem of optimum allocation in multivariate stratified sampling, consider the
following notation.

\subsection{Notation}

The subindex $h=1,2,\ldots,H$ denotes the stratum, $i=1,2,\ldots,N_{h} \mbox{ or } n_{h}$ the unit
within stratum $h$ and $j=1,2,\ldots,G$ denotes the characteristic (variable). Moreover:

\bigskip

\begin{footnotesize}
\begin{tabular}{ll}
    $N_{h}$ & Total number of units within stratum $h$\\
    $n_{h}$ &  Number of units from the sample in stratum $h$\\
    \begin{tabular}{lcl}
       $\mathbf{Y}_{h}$ &=& $(\mathbf{Y}_{h}^{1} \dots \mathbf{Y}_{h}^{G})$ \\
        &=& $(\mathbf{Y}_{h1} \dots \mathbf{Y}_{h N_{h}})'$
     \end{tabular} & $N_{h} \times G$ matrix population in stratum $h$; $\mathbf{Y}_{hi}$ is the\\
     & $i$-th $G$-dimensional value of the $i$-th unit in stratum $h$\\
    \begin{tabular}{lcl}
       $\mathbf{y}_{h}$ &=& $(\mathbf{y}_{h}^{1} \dots \mathbf{y}_{h}^{G})$ \\
        &=& $(\mathbf{y}_{h1} \dots \mathbf{y}_{h n_{h}})'$
     \end{tabular} & $n_{h} \times G$ matrix sample in stratum $h$; $\mathbf{y}_{hi}$ is the\\
     & $i$-th element of the $G$-dimensional random sample\\
     & in stratum $h$\\
    $y_{hi}^{j}$ &  Value obtained for the $i$-th unit in stratum $h$\\
    & of the $j$-th characteristic\\[1ex]
    $\mathbf{n} = ({n}_{1},\dots, {n}_{H})'$ & Vector of the number of units in the sample\\
    $\displaystyle{W_{h}} = \displaystyle{\frac{N_{h}}{N}}$ & Relative size of stratum  $h$\\[3ex]
    $\displaystyle{\overline{Y}_{h}^{j}} = \frac{1}{N_{h}} \displaystyle{
        \sum_{i=1}^{N_{h}}} y_{hi}^j$ & Population mean in stratum $h$ of the $j$-th characteristic\\[3ex]
    $\overline{\mathbf{Y}}_{h}=  (\overline{Y}_{h}^{1}, \dots,\overline{Y}_{h}^{G})'$
    & Population mean vector in stratum $h$ \\[1ex]
\end{tabular}
\end{footnotesize}

\begin{footnotesize}
\begin{tabular}{ll}
    $\displaystyle{\overline{y}_{h}^{j}}= \frac{1}{n_h} \displaystyle{
        \sum_{i=1}^{n_{h}}} y_{hi}^{j}$ & Sample mean in stratum $h$ of the $j$-th characteristic\\[3ex]
    $\overline{\mathbf{y}}_{h}=  (\overline{y}_{h}^{1}, \dots,\overline{y}_{h}^{G})'$
    & Sample mean vector in stratum $h$ \\[1ex]
    $\displaystyle{\overline{y}_{_{ST}}^{j}=  \sum_{h=1}^{H}W_{h}\overline{y}_{h}^{j}}
    $
    & Estimator of the population mean in multivariate\\
    & stratified sampling for the $j$-th characteristic\\
    $\overline{\mathbf{y}}_{_{ST}}=(\overline{y}_{_{ST}}^{1},\dots,\overline{y}_{_{ST}}^{G})'$
    & Estimator of the population mean vector in\\
    & multivariate stratified sampling\\
    $\mathbf{S}_{h}$ & Variance-covariance matrix in stratum $h$\\
    & $\mathbf{S}_{h}= \displaystyle{\frac{1}{N_h}} \displaystyle{
        \sum_{i=1}^{N_{h}}} (\mathbf{y}_{hi}-\overline{\mathbf{Y}}_{h})(\mathbf{y}_{hi}-\overline{\mathbf{Y}}_{h})'$\\[2ex]
    &  where $S_{h_{jk}}$ is the covariance in stratum $h$ of the\\
    & $j$-th and $k$-th characteristics; furthermore\\
    & $S_{h_{jk}}= \displaystyle{\frac{1}{N_h}}  \displaystyle{
        \sum_{i=1}^{N_{h}}}(y_{hi}^{j}-\overline{y}_{h}^{j})(y_{hi}^{k}-\overline{y}_{h}^{k})$, and \\
    & $S_{h_{jj}}\equiv S_{hj}^{2} = \displaystyle{\frac{1}{N_h}} \displaystyle{
        \sum_{i=1}^{N_{h}}}(y_{hi}^{j}-\overline{y}_{h}^{j})^2$\\
    $\mathbf{s}_{h}$ & Estimator of the covariance matrix in stratum\\
    &  $h$;\\
    & $\mathbf{s}_{h}=\displaystyle{\frac{1}{n_h-1}} \displaystyle{
        \sum_{i=1}^{n_{h}}}(\mathbf{y}_{hi}-\overline{\mathbf{y}}_{h})(\mathbf{y}_{hi}-\overline{\mathbf{y}}_{h})'$\\
    &  where $s_{h_{jk}}$ is the sample covariance in stratum $h$ of the\\
    & $j$-th and $k$-th characteristics; furthermore\\
    & $s_{h_{jk}}=\displaystyle{\frac{1}{n_h-1}} \displaystyle{
        \sum_{i=1}^{n_{h}}}(y_{hi}^{j}-\overline{y}_{h}^{j})(y_{hi}^{k}-\overline{y}_{h}^{k})$, and \\
    & $s_{h_{jj}}\equiv s_{hj}^{2} = \displaystyle{\frac{1}{n_h-1}} \displaystyle{
        \sum_{i=1}^{n_{h}}}(y_{hi}^{j}-\overline{y}_{h}^{j})^2$\\
    $\Cov(\overline{\mathbf{y}}_{_{ST}})$
        & Variance-covariance matrix of $\overline{\mathbf{y}}_{_{ST}}$\\
    $
    \widehat{\Cov}(\overline{\mathbf{y}}_{_{ST}})$
        & Estimator of the covariance matrix of $\overline{\mathbf{y}}_{_{ST}}$,\\
        & it is denoted as $\widehat{\Cov}(\overline{\mathbf{y}}_{_{ST}}) \equiv
        \widehat{\Cov(\overline{\mathbf{y}}_{_{ST}})}$, and defined as \\[1ex]
     & $
             = \left(
                \begin{array}{cccc}
                  \widehat{\Var}(\overline{y}_{_{ST}}^{1}) & \widehat{\Cov}(\overline{y}_{_{ST}}^{1},\overline{y}_{_{ST}}^{2})
                    & \cdots & \widehat{\Cov}(\overline{y}_{_{ST}}^{1},\overline{y}_{_{ST}}^{G}) \\
                  \widehat{\Cov}(y_{_{ST}}^{2},\overline{y}_{_{ST}}^{1}) & \widehat{\Var}(\overline{y}_{_{ST}}^{2}) & \cdots
                    & \widehat{\Cov}(\overline{y}_{_{ST}}^{2},\overline{y}_{_{ST}}^{G}) \\
                  \vdots & \vdots & \ddots & \vdots \\
                  \widehat{\Cov}(\overline{y}_{_{ST}}^{G},\overline{y}_{_{ST}}^{1}) & \widehat{\Cov}(\overline{y}_{_{ST}}^{G},
                    \overline{y}_{_{ST}}^{2}) & \cdots & \widehat{\Var}(\overline{y}_{_{ST}}^{G}) \\
                \end{array}
            \right )
          $\\[3ex]
        & = $\displaystyle{\sum_{h=1}^{H}\frac{{{W_{h}}^{2}}
        \mathbf{s}_{h}}{n_{h}} - \sum_{h=1}^{H} \frac{{W_{h}}\mathbf{s}_{h}}{N}}$\\[1ex]
        $\widehat{\Cov}(\overline{y}_{_{ST}}^{j},\overline{y}_{_{ST}}^{k}) $ &
        Estimated covariance of $\overline{y}_{_{ST}}^{j}$ and $\overline{y}_{_{ST}}^{k}$ where \\[1ex]
        & $\widehat{\Cov}(\overline{y}_{_{ST}}^{k},\overline{y}_{_{ST}}^{j}) \equiv
        \widehat{\Cov(\overline{y}_{_{ST}}^{j},\overline{y}_{_{ST}}^{k})}$, with \\[1ex]
        &  $\widehat{\Cov}(\overline{y}_{_{ST}}^{j},\overline{y}_{_{ST}}^{k})= \displaystyle{\sum_{h=1}^{H}\frac{{{W_{h}}^{2}}
        s_{h_{jk}}}{n_{h}} - \sum_{h=1}^{H} \frac{{W_{h}}s_{h_{jk}}}{N}}$, and \\[1ex]
        &
        $\widehat{\Cov}(\overline{y}_{_{ST}}^{j},\overline{y}_{_{ST}}^{j}) \equiv
        \widehat{\Var}(\overline{y}_{_{ST}}^{j})= \displaystyle{\sum_{h=1}^{H}\frac{{{W_{h}}^{2}}
        s_{hj}^{2}}{n_{h}} - \sum_{h=1}^{H} \frac{{W_{h}}s_{hj}^{2}}{N}}$ \\[2ex]
       $c_{h}$ & Cost per $G$-dimensional sampling unit in stratum $h$ and let\\
       & $\mathbf{c} = (c_{1}, \dots, c_{G})'$.\\[2ex]
\end{tabular}
\end{footnotesize}

\noindent Where if $\mathbf{a} \in \Re^{G}$, $\mathbf{a}'$ denotes the transpose of
$\mathbf{a}$.

\subsection{Asymptotic normality}

Now, the asymptotic distribution of the estimator, $\mathbf{s}_{h}$, of the covariance matrix  is
stated. First, consider the following notation and definitions.

A detailed discussion of operator ``$\Vec$", ``$\Vech$", Moore-Penrose inverse, Kronecker
product, commutation matrix and duplication matrix may be found in \citet{mn:88}, among many
others. For convenience, some notation is introduced, although in general it adheres to
standard notation.

For all matrix $\mathbf{A}$ there exists a unique matrix $\mathbf{A}^{+}$ which is termed
\emph{Moore-Penrose inverse} of $\mathbf{A}$.

Let $\mathbf{A}$ be an $m \times n$ matrix and $\mathbf{B}$ a $p \times q$ matrix. The $mp
\times nq$ matrix defined by
$$
  \left[
  \begin{array}{ccc}
    a_{11}\mathbf{B} & \cdots & a_{11}\mathbf{B} \\
    \vdots & \ddots & \vdots \\
    a_{11}\mathbf{B} & \cdots & a_{11}\mathbf{B}
  \end{array}
  \right]
$$
is termed the \emph{Kronecker product} (also termed tensor product or direct product) of
$\mathbf{A}$ and $\mathbf{B}$ and written $\mathbf{A} \otimes \mathbf{B}$. Let $\mathbf{C}$ be
an $m \times n$ matrix and $\mathbf{C}_{j}$ its $j$-th column, then $\Vec \mathbf{C}$ is the
$mn \times 1$ vector
$$
  \Vec C =
  \left [
  \begin{array}{c}
    \mathbf{C}_{1} \\
    \mathbf{C}_{2} \\
    \vdots \\
    \mathbf{C}_{n}
  \end{array}
  \right].
$$
The vectors $\Vec \mathbf{C}$ and $\Vec \mathbf{C}^{'}$ clearly contain the same $mn$
components, but in different order. Therefore there exists a unique $mn \times mn$ permutation
matrix which transforms $\Vec \mathbf{C}$ into $\Vec \mathbf{C}'$. This matrix is termed the
\emph{commutation matrix} and is denoted $\mathbf{K}_{mn}$ (If $m=n$, it is often written
$\mathbf{K}_{n}$ instead of $\mathbf{K}_{mn}$). Hence
$$
  \mathbf{K}_{mn} \Vec \mathbf{C} = \Vec \mathbf{C}'.
$$
Similarly, let $\mathbf{B}$ be a square $n \times n$ matrix. Then $\Vech \mathbf{B}$ (also
denoted as $\v(\mathbf{B})$) shall denote the $n(n+1)/2 \times 1$ vector that is obtained
from $\Vec \mathbf{B}$ by eliminating all supradiagonal elements of $\mathbf{B}$. If
$\mathbf{B} = \mathbf{B}'$, $\Vech \mathbf{B}$ contains only the distinct elements of
$\mathbf{B}$, then there exists a unique $n^{2} \times n(n+1)/2$ matrix termed \textit{duplication
matrix}, which is denoted by $\mathbf{D}_{n}$, such that $\mathbf{D}_{n}\Vech \mathbf{B} = \Vec
\mathbf{B}$ and $\mathbf{D}_{n}^{+}\Vec \mathbf{B} = \Vech \mathbf{B}$. Finally, note that
$(\Vech \mathbf{B})' \equiv \Vech' \mathbf{B}$.

Now, with the above mathematical tools and based in the extension given in \citet{h:61},
the multivariate version of H\'ajek's theorem is restated in terms of sampling theory terminology,
which is explained in detail in \citet{dgrr:11}.

\begin{lem}\label{lemma1}
Let $\mathbf{\mathbf{\Xi}}_{\nu}$ be a $G \times G$ symmetric random matrix defined as
$$
   \mathbf{\Xi}_{\nu} =  \frac{1}{n_{\nu}-1}\sum_{i = 1}^{n_{\nu}}(\mathbf{y}_{\nu i}- \overline{\mathbf{Y}}_{\nu})
   (\mathbf{y}_{\nu i}- \overline{\mathbf{Y}}_{\nu})'.
$$
Suppose that for $\boldsymbol{\lambda} = (\lambda_{1}, \dots, \lambda_{k})'$, any vector of constants, $k = G(G+1)/2$,%
{\small
\begin{equation}\label{shc}
  \boldsymbol{\lambda}'\left(\mathbf{M}_{\nu}^{4} - \Vech \mathbf{S}_{\nu}
  \Vech' \mathbf{S}_{\nu}\right) \boldsymbol{\lambda} \geq \epsilon \build{\max}{}{1 \leq \alpha \leq k
  } \left[\lambda_{\alpha}^{2} \mathbf{e}_{k}^{\alpha '}\left(\mathbf{M}_{\nu}^{4} - \Vech \mathbf{S}_{\nu}
  \Vech' \mathbf{S}_{\nu}\right) \mathbf{e}_{k}^{\alpha }\right],
\end{equation}}
where $\mathbf{e}_{k}^{\alpha } = (0, \dots, 0, 1, 0, \dots, 0)'$ is the $\alpha$-th vector of
the canonical base of $\Re^{k}$, $\epsilon > 0$ and independent of $\nu
> 1$ and
$$
  \mathbf{M}_{\nu}^{4} = \frac{1}{N_{\nu}}\mathbf{D}_{G}^{+}\left[\sum_{i = 1}^{N_{\nu}}
  (\mathbf{y}_{\nu i} - \overline{\mathbf{Y}}_{\nu})(\mathbf{y}_{\nu i} - \overline{\mathbf{Y}}_{\nu})'
  \otimes (\mathbf{y}_{\nu i} - \overline{\mathbf{Y}}_{\nu})(\mathbf{y}_{\nu i} - \overline{\mathbf{Y}}_{\nu})'
  \right]\mathbf{D}_{G}^{+'},
$$
is the fourth central moment. Assume that $n_{\nu}\rightarrow \infty$, $N_{\nu} - n_{\nu}
\rightarrow \infty$, $N_{\nu}\rightarrow \infty$, and that, for all $j = 1,\dots,G$,%
\begin{equation}\label{hcas}
  \left[\build{\lim}{}{\nu \rightarrow \infty}\left(\frac{n_{\nu}}{N_{\nu}}\right) = 0\right] \Rightarrow
  \build{\lim}{}{\nu \rightarrow \infty} \frac{\build{\max}{}{1 \leq i_{1} < \cdots < i_{n_{\nu}}\leq N_{\nu}}
  \displaystyle\sum_{\beta = 1}^{n_{\nu}}\left[\left(y_{\nu i_{\beta}}^{j} - \overline{Y}_{\nu}^{j}\right)^{2} - S_{\nu j}^{2}\right]^{2}}
  {N_{\nu}\left[m_{\nu j}^{4}-\left(S_{\nu j}^{2}\right)^{2}\right]} = 0,
\end{equation}
where
$$
  m_{\nu j}^{4} =\frac{1}{N_{\nu}}\sum_{i = 1}^{N_{\nu}}\left(y_{\nu i}^{j} -
  \overline{y}_{\nu}^{j}\right)^{4}.
$$
Then, $\Vech\mathbf{\Xi}_{\nu}$ is asymptotically normal distributed as
$$
  \Vech\mathbf{\Xi}_{\nu} \build{\rightarrow}{d}{} \mathcal{N}_{k}(\E(\Vech\mathbf{\Xi}_{\nu}),
  \Cov(\Vech\mathbf{\Xi}_{\nu})),
$$
with
\begin{equation}\label{mXi}
    \E(\Vech\mathbf{\Xi}_{\nu}) = \frac{n_{\nu}}{n_{\nu}-1}\Vech\mathbf{S}_{\nu},
\end{equation}
and
\begin{equation}\label{cmXi}
    \Cov(\Vech\mathbf{\Xi}_{\nu}) = \frac{n_{\nu}}{(n_{\nu} - 1)^{2}}\left(\mathbf{M}_{\nu}^{4}
     - \Vech \mathbf{S}_{\nu}\Vech' \mathbf{S}_{\nu}\right).
\end{equation}
$n_{\nu}$ is the sample size for a simple random sample from the $\nu$-th population of size
$N_{\nu}$.
\end{lem}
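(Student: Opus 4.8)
Proof proposal.

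The plan is to reduce the multivariate assertion to the classical univariate finite-population central limit theorem of \citet{h:61} by means of the Cram\'er--Wold device, and then to identify the limiting parameters through a direct finite-population moment computation. First I would fix an arbitrary vector $\boldsymbol{\lambda} \in \Re^{k}$ and attach to each population unit $i = 1, \dots, N_{\nu}$ the scalar score
$$
  u_{\nu i} = \boldsymbol{\lambda}'\Vech\left[(\mathbf{y}_{\nu i} - \overline{\mathbf{Y}}_{\nu})(\mathbf{y}_{\nu i} - \overline{\mathbf{Y}}_{\nu})'\right],
$$
a fixed quadratic form in the population-centred observation. Because $\mathbf{\Xi}_{\nu}$ is centred at the \emph{population} mean $\overline{\mathbf{Y}}_{\nu}$, the target linear combination collapses to a scaled sample total of these scores, $\boldsymbol{\lambda}'\Vech\mathbf{\Xi}_{\nu} = (n_{\nu}-1)^{-1}\sum_{i \in s_{\nu}} u_{\nu i}$, where $s_{\nu}$ is the simple random sample (without replacement) of size $n_{\nu}$. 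Thus, up to the deterministic factor $n_{\nu}/(n_{\nu}-1)$, the statistic is the mean of a scalar characteristic over a simple random sample, which is precisely the object governed by H\'ajek's theorem.

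The second step is the moment identification. Writing $\bar{u}_{\nu} = N_{\nu}^{-1}\sum_{i=1}^{N_{\nu}} u_{\nu i}$, linearity of $\Vech$ gives $\bar{u}_{\nu} = \boldsymbol{\lambda}'\Vech\mathbf{S}_{\nu}$, so the expectation of the sample total yields \eqref{mXi}. For the covariance I would use the identity $(\mathbf{a}\mathbf{a}')\otimes(\mathbf{a}\mathbf{a}') = \Vec(\mathbf{a}\mathbf{a}')\Vec'(\mathbf{a}\mathbf{a}')$ together with $\mathbf{D}_{G}^{+}\Vec\mathbf{B} = \Vech\mathbf{B}$ to recognise, with $\mathbf{v}_{\nu i} = \Vech[(\mathbf{y}_{\nu i}-\overline{\mathbf{Y}}_{\nu})(\mathbf{y}_{\nu i}-\overline{\mathbf{Y}}_{\nu})']$, that
$$
  \mathbf{M}_{\nu}^{4} - \Vech\mathbf{S}_{\nu}\Vech'\mathbf{S}_{\nu} = \frac{1}{N_{\nu}}\sum_{i=1}^{N_{\nu}}(\mathbf{v}_{\nu i} - \Vech\mathbf{S}_{\nu})(\mathbf{v}_{\nu i} - \Vech\mathbf{S}_{\nu})',
$$
which is exactly the finite-population covariance of the vectors $\mathbf{v}_{\nu i}$. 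The classical variance formula for a sample total under simple random sampling, $\Cov(\sum_{i\in s_{\nu}}\mathbf{v}_{\nu i}) = n_{\nu}\frac{N_{\nu}-n_{\nu}}{N_{\nu}-1}(\mathbf{M}_{\nu}^{4}-\Vech\mathbf{S}_{\nu}\Vech'\mathbf{S}_{\nu})$, divided by $(n_{\nu}-1)^{2}$ and combined with the finite-population correction $\frac{N_{\nu}-n_{\nu}}{N_{\nu}-1}\rightarrow 1$ forced by $n_{\nu}/N_{\nu}\rightarrow 0$, reproduces \eqref{cmXi} as the limiting covariance.

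The third step, which I expect to be the main obstacle, is showing that hypotheses \eqref{shc} and \eqref{hcas} deliver the Lindeberg--H\'ajek condition for the scalar scores $u_{\nu i}$ required by \citet{h:61}. The difficulty is that $u_{\nu i}$ is a quadratic form, so its population variance and the relevant tail sums mix the coordinate characteristics, whereas \eqref{hcas} is stated coordinatewise. I would first bound the numerator of the scalar H\'ajek ratio by $(u_{\nu i}-\bar{u}_{\nu})^{2} \le k\sum_{\alpha=1}^{k}\lambda_{\alpha}^{2}(v_{\nu i,\alpha}-(\Vech\mathbf{S}_{\nu})_{\alpha})^{2}$, so that
$$
  \build{\max}{}{|s|=n_{\nu}}\sum_{i\in s}(u_{\nu i}-\bar{u}_{\nu})^{2} \le k^{2}\build{\max}{}{1\le\alpha\le k}\left[\lambda_{\alpha}^{2}\build{\max}{}{|s|=n_{\nu}}\sum_{i\in s}(v_{\nu i,\alpha}-(\Vech\mathbf{S}_{\nu})_{\alpha})^{2}\right].
$$
Condition \eqref{shc} enters precisely here: it lower-bounds the denominator $N_{\nu}\,\boldsymbol{\lambda}'(\mathbf{M}_{\nu}^{4}-\Vech\mathbf{S}_{\nu}\Vech'\mathbf{S}_{\nu})\boldsymbol{\lambda}$ by $\epsilon N_{\nu}\max_{\alpha}[\lambda_{\alpha}^{2}\mathbf{e}_{k}^{\alpha'}(\mathbf{M}_{\nu}^{4}-\Vech\mathbf{S}_{\nu}\Vech'\mathbf{S}_{\nu})\mathbf{e}_{k}^{\alpha}]$, guaranteeing that the variance in the denominator does not degenerate relative to the coordinatewise variances. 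Combining the two bounds dominates the scalar H\'ajek ratio by $(k^{2}/\epsilon)$ times the maximum of the coordinate ratios; for the diagonal coordinates these are exactly the quantities sent to zero by \eqref{hcas}, while the off-diagonal cross-product coordinates must be controlled by a Cauchy--Schwarz reduction to the diagonal ones. Once the scalar Lindeberg--H\'ajek condition is verified, H\'ajek's theorem gives asymptotic normality of $\boldsymbol{\lambda}'\Vech\mathbf{\Xi}_{\nu}$ with the mean and variance computed above; since $\boldsymbol{\lambda}$ was arbitrary, the Cram\'er--Wold device upgrades this to the joint convergence $\Vech\mathbf{\Xi}_{\nu}\build{\rightarrow}{d}{}\mathcal{N}_{k}(\E(\Vech\mathbf{\Xi}_{\nu}),\Cov(\Vech\mathbf{\Xi}_{\nu}))$ asserted in the lemma.
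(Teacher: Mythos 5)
First, a point of comparison: the paper contains no proof of Lemma \ref{lemma1} at all --- it is presented as a restatement, in sampling terminology, of the multivariate extension of H\'ajek's theorem, with the derivation deferred to \citet{h:61} and \citet{dgrr:11}. Your skeleton --- Cram\'er--Wold reduction of $\boldsymbol{\lambda}'\Vech\mathbf{\Xi}_{\nu}$ to a scalar sample total under simple random sampling without replacement, then H\'ajek's univariate finite-population CLT --- is precisely the intended route, and your first two steps are sound: since $\mathbf{\Xi}_{\nu}$ is centred at the \emph{population} mean, $\Vech\mathbf{\Xi}_{\nu}$ is $(n_{\nu}-1)^{-1}$ times the sample total of the fixed population vectors $\mathbf{v}_{\nu i}$; the identity $\Vec(\mathbf{a}\mathbf{a}') = \mathbf{a}\otimes\mathbf{a}$ together with $\mathbf{D}_{G}^{+}\Vec\mathbf{B}=\Vech\mathbf{B}$ correctly identifies $\mathbf{M}_{\nu}^{4}-\Vech\mathbf{S}_{\nu}\Vech'\mathbf{S}_{\nu}$ as the finite-population covariance of the $\mathbf{v}_{\nu i}$, and the mean computation gives \eqref{mXi} exactly. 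One caveat on your covariance step: the exact covariance of the sample total carries the correction $(N_{\nu}-n_{\nu})/(N_{\nu}-1)$, which \eqref{cmXi} omits; you dispose of it by invoking $n_{\nu}/N_{\nu}\rightarrow 0$, but the lemma never assumes this --- condition \eqref{hcas} is only an implication \emph{conditional} on a vanishing sampling fraction --- so \eqref{cmXi} is the limiting covariance only along such sequences. That is a defect of the statement rather than of your argument, but your write-up should say so explicitly instead of asserting that $n_{\nu}/N_{\nu}\rightarrow 0$ is ``forced.''

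The genuine gap is the one you flag yourself and then do not close: the Lindeberg--H\'ajek condition in directions loading the off-diagonal coordinates. Your reduction of the scalar H\'ajek ratio to $(k^{2}/\epsilon)\build{\max}{}{\alpha}\,A_{\alpha}/(N_{\nu}\sigma_{\alpha}^{2})$, where $A_{\alpha}$ is the max-over-samples sum of squared deviations in coordinate $\alpha$ and $\sigma_{\alpha}^{2}=\mathbf{e}_{k}^{\alpha '}\left(\mathbf{M}_{\nu}^{4}-\Vech\mathbf{S}_{\nu}\Vech'\mathbf{S}_{\nu}\right)\mathbf{e}_{k}^{\alpha}$, is fine, because \eqref{shc} lets you pass from the $\boldsymbol{\lambda}$-weighted maxima to unweighted ones. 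But \eqref{hcas} annihilates these ratios only for the $G$ \emph{diagonal} coordinates. For a cross-product coordinate $\alpha\leftrightarrow(j,l)$, $j\neq l$, the promised ``Cauchy--Schwarz reduction to the diagonal ones'' does not go through: Cauchy--Schwarz controls only the numerator, via $\left[(y^{j}_{\nu i}-\overline{Y}^{j}_{\nu})(y^{l}_{\nu i}-\overline{Y}^{l}_{\nu})\right]^{2}\leq \frac{1}{2}\left[(y^{j}_{\nu i}-\overline{Y}^{j}_{\nu})^{4}+(y^{l}_{\nu i}-\overline{Y}^{l}_{\nu})^{4}\right]$, while the denominator $N_{\nu}\sigma_{(jl)}^{2}$ is the population variance of the cross products, and neither \eqref{shc} nor \eqref{hcas} bounds it from below by the diagonal variances $m_{\nu j}^{4}-(S_{\nu j}^{2})^{2}$ --- condition \eqref{shc} compares $\boldsymbol{\lambda}'(\cdot)\boldsymbol{\lambda}$ with its own weighted diagonal entries, never one diagonal entry of $\mathbf{M}_{\nu}^{4}-\Vech\mathbf{S}_{\nu}\Vech'\mathbf{S}_{\nu}$ with another. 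A population in which the cross products $(j,l)$ have small variability relative to the fourth moments of the individual characteristics therefore defeats your bound. To complete the proof you must either postulate the ratio condition \eqref{hcas} for all $k$ coordinates of $\mathbf{v}_{\nu i}$ (equivalently, for the scalar scores $u_{\nu i}$ attached to each fixed $\boldsymbol{\lambda}$) --- which is evidently how the source \citet{dgrr:11} must be read --- or supply a genuinely new argument; as written, your proposal establishes asymptotic normality only for $\boldsymbol{\lambda}$ supported on the diagonal coordinates.
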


Then:

\begin{thm}\label{teo1}
Under assumptions in Lemma \ref{lemma1}, the sequence of sample covariance matrices
$\mathbf{s}_{\nu}$ are such that $\Vech \mathbf{s}_{\nu}$ has an asymptotical normal with
asymptotic mean and covariance matrix given by (\ref{mXi}) and (\ref{cmXi}), respectively.
\end{thm}
\begin{proof}
This follows immediately from Lemma \ref{lemma1}, only observe that
\begin{eqnarray*}
% \nonumber to remove numbering (before each equation)
  \mathbf{s}_{\nu} &=&  \frac{1}{n_{\nu}-1}\sum_{i = 1}^{n_{\nu}}(\mathbf{y}_{\nu i}- \overline{\mathbf{y}}_{\nu})
   (\mathbf{y}_{\nu i}- \overline{\mathbf{y}}_{\nu})' \\
   &=& \mathbf{\Xi} - \frac{n_{\nu}}{n_{\nu}-1} (\overline{\mathbf{y}}_{\nu}- \overline{\mathbf{Y}}_{\nu})
   (\overline{\mathbf{y}}_{\nu }- \overline{\mathbf{Y}}_{\nu})',
\end{eqnarray*}
where
$$
  \frac{n_{\nu}}{n_{\nu}-1} \rightarrow 1 \quad \mbox{ and } \quad (\overline{\mathbf{y}}_{\nu}- \overline{\mathbf{Y}}_{\nu})
   (\overline{\mathbf{y}}_{\nu }- \overline{\mathbf{Y}}_{\nu})'\rightarrow 0 \quad \mbox{in
   probability}. \qquad\mbox{\qed}
$$
\end{proof}

\begin{rem}
Observe that it is possible to found the asymptotic distribution of $\Vec \mathbf{S}_{\nu}$, but
this asymptotic normal distribution is singular, because $\Cov(\Vec\mathbf{S}_{\nu})$ is
singular. This is due to the fact $\Cov(\Vec\mathbf{S}_{\nu})$ is the $G^{2} \times G^{2}$
covariance matrix in the asymptotic distribution of $\Vec \mathbf{S}_{\nu}$ and,
because $\mathbf{S}_{\nu}$ is symmetric, then $\Vec \mathbf{S}_{\nu}$ has repeated elements. In
this case, $\Vec\mathbf{S}_{\nu}$ is asymptotically normally distributed as (see \citet{mh:82})
$$
  \Vec\mathbf{S}_{\nu} \build{\rightarrow}{d}{} \mathcal{N}_{G^{2}}(\E(\Vec\mathbf{\Xi}_{\nu}),
  \Cov(\Vec\mathbf{\Xi}_{\nu})),
$$
where
$$
  \E(\Vec\mathbf{\Xi}_{\nu}) = \frac{n_{\nu}}{n_{\nu}-1}\Vec\mathbf{S}_{\nu},
$$
$$
  \Cov(\Vec\mathbf{\Xi}_{\nu}) = \frac{n_{\nu}}{(n_{\nu} - 1)^{2}}\left(\mathfrak{M}_{\nu}^{4}
  - \Vec \mathbf{S}_{\nu}\Vec' \mathbf{S}_{\nu}\right),
$$
and
$$
  \mathfrak{M}_{\nu}^{4} = \frac{1}{N_{\nu}}\left[\sum_{i = 1}^{N_{\nu}}
  (\mathbf{y}_{\nu i} - \overline{\mathbf{Y}}_{\nu})(\mathbf{y}_{\nu i} - \overline{\mathbf{Y}}_{\nu})'
  \otimes (\mathbf{y}_{\nu i} - \overline{\mathbf{Y}}_{\nu})(\mathbf{y}_{\nu i} - \overline{\mathbf{Y}}_{\nu})'
  \right]. \qquad \mbox{\qed}
$$
\end{rem}

The following assertion is an immediate consequence of Theorem \ref{teo1}.

\begin{thm}
Let $\widehat{\Cov}(\overline{\mathbf{y}}_{_{ST}})$ be the estimator of the covariance matrix
of $\overline{\mathbf{y}}_{ST}$, then
$$
  \Vech \widehat{\Cov}(\overline{\mathbf{y}}_{_{ST}}) = \sum_{h=1}^{H}\left(\frac{{{W_{h}}^{2}}}{n_{h}} -
  \frac{{W_{h}}}{N} \right)\Vech \mathbf{s}_{h}
$$
is asymptotically normally distributed; furthermore
\begin{equation}\label{normal}
    \Vech \widehat{\Cov}(\overline{\mathbf{y}}_{_{ST}}) \build{\rightarrow}{d}{} \mathcal{N}_{k}
  \left(\E\left(\Vech \widehat{\Cov}(\overline{\mathbf{y}}_{_{ST}})\right),
  \Cov\left(\Vech \widehat{\Cov}(\overline{\mathbf{y}}_{_{ST}})\right)\right),
\end{equation}
where
\begin{equation}\label{ecyst}
    \E\left(\Vech \widehat{\Cov}(\overline{\mathbf{y}}_{_{ST}})\right) =  \sum_{h=1}^{H}\left(
     \frac{{{W_{h}}^{2}}}{n_{h}} - \frac{{W_{h}}}{N} \right) \frac{n_{h}}{n_{h}-1}\Vech\mathbf{S}_{h},
\end{equation}
\begin{eqnarray}
    \Cov\left(\Vech \widehat{\Cov}(\overline{\mathbf{y}}_{_{ST}})\right) \hspace{8cm}\nonumber \\
    \label{ccyst}
    \phantom{xx}=\sum_{h=1}^{H}\left(
    \frac{{{W_{h}}^{2}}}{n_{h}} - \frac{{W_{h}}}{N} \right)^{2} \frac{n_{h}}{(n_{h}-1)^{2}}
    \left(\mathbf{M}_{h}^{4} - \Vech \mathbf{S}_{h}\Vech' \mathbf{S}_{h}\right),
\end{eqnarray}
and
$$
  \mathbf{M}_{h}^{4} = \frac{1}{N_{h}}\mathbf{D}_{G}^{+}\left[\sum_{i = 1}^{N_{h}}
    (\mathbf{y}_{h i} - \overline{\mathbf{Y}}_{h})(\mathbf{y}_{h i} - \overline{\mathbf{Y}}_{h})'
    \otimes (\mathbf{y}_{h i} - \overline{\mathbf{Y}}_{h})(\mathbf{y}_{h i} - \overline{\mathbf{Y}}_{h})'
    \right]\mathbf{D}_{G}^{+'}.
$$
\end{thm}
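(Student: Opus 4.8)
The plan is to exhibit $\Vech \widehat{\Cov}(\overline{\mathbf{y}}_{_{ST}})$ as a fixed linear combination of the per-stratum vectors $\Vech \mathbf{s}_{h}$ and then transfer the asymptotic normality of each summand, supplied by Theorem \ref{teo1}, to the whole sum. Writing $a_{h} = W_{h}^{2}/n_{h} - W_{h}/N$, the linearity of the operator $\Vech$ gives $\Vech \widehat{\Cov}(\overline{\mathbf{y}}_{_{ST}}) = \sum_{h=1}^{H} a_{h}\,\Vech \mathbf{s}_{h}$, so the identity displayed in the statement is immediate. Each $a_{h}$ is a nonrandom scalar, and by Theorem \ref{teo1} every $\Vech \mathbf{s}_{h}$ is asymptotically $\mathcal{N}_{k}$ with mean (\ref{mXi}) and covariance (\ref{cmXi}) specialized to $\nu = h$.

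The decisive structural fact I would use is that the samples drawn in distinct strata are mutually independent; hence the random vectors $\Vech \mathbf{s}_{1}, \dots, \Vech \mathbf{s}_{H}$ are independent. From this I would deduce the joint asymptotic normality of $(\Vech \mathbf{s}_{1}, \dots, \Vech \mathbf{s}_{H})'$ out of the marginal statements. Concretely, I would invoke the Cram\'er--Wold device: for any fixed $\mathbf{b}_{1}, \dots, \mathbf{b}_{H} \in \Re^{k}$ the scalar $\sum_{h} \mathbf{b}_{h}' \Vech \mathbf{s}_{h}$ is a sum of $H$ independent terms, so its characteristic function factorizes as $\prod_{h} \E \exp(\mathrm{i} t\, \mathbf{b}_{h}' \Vech \mathbf{s}_{h})$; each factor converges, by the continuous mapping theorem applied to the marginal limit, to the characteristic function of the corresponding Gaussian, and since $H$ is fixed the product of these finitely many factors converges to a Gaussian characteristic function. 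Hence the linear functional is asymptotically univariate normal, and Cram\'er--Wold upgrades this to joint asymptotic normality of the stacked vector.

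Because $\Vech \widehat{\Cov}(\overline{\mathbf{y}}_{_{ST}}) = \sum_{h} a_{h}\,\Vech \mathbf{s}_{h}$ is a fixed linear image of this jointly Gaussian limit, it is itself asymptotically normal, which is assertion (\ref{normal}). It then remains only to read off the two parameters. By linearity of expectation, $\E(\Vech \widehat{\Cov}(\overline{\mathbf{y}}_{_{ST}})) = \sum_{h} a_{h}\, \E(\Vech \mathbf{s}_{h})$, and inserting (\ref{mXi}) with $\nu = h$ yields (\ref{ecyst}). For the covariance, independence across strata annihilates all cross terms while the scalar weights factor out squared, so $\Cov(\Vech \widehat{\Cov}(\overline{\mathbf{y}}_{_{ST}})) = \sum_{h} a_{h}^{2}\, \Cov(\Vech \mathbf{s}_{h})$; substituting (\ref{cmXi}) with $\nu = h$ gives (\ref{ccyst}).

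I expect the one genuinely nontrivial step to be the passage from the marginal limits to a joint, and hence to a linearly combined, limit. Asymptotic normality is a convergence-in-distribution statement rather than an exact distributional identity, so the additivity of the means and covariances and the preservation of normality under the weighted sum cannot simply be asserted; they must be earned through the independence-plus-Cram\'er--Wold argument above. Everything else --- the linearity of $\Vech$, of $\E$, and the vanishing of the cross-covariances --- is routine bookkeeping.
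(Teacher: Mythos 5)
Your proposal is correct and is essentially the paper's own (unwritten) argument: the paper states this theorem with no proof at all, declaring it ``an immediate consequence of Theorem \ref{teo1}'', and what you have written --- the identity $\Vech \widehat{\Cov}(\overline{\mathbf{y}}_{_{ST}}) = \sum_{h} \left(W_{h}^{2}/n_{h} - W_{h}/N\right)\Vech \mathbf{s}_{h}$ by linearity of $\Vech$, independence of the samples across strata, the Cram\'er--Wold device to pass from the marginal limits of Theorem \ref{teo1} to the joint limit, and the resulting additivity of means and of covariances with vanishing cross terms --- is precisely the routine justification that the paper leaves implicit. One cosmetic remark: the convergence of each characteristic-function factor follows from the definition of convergence in distribution (equivalently, the L\'evy continuity theorem), since $e^{\mathrm{i}t x}$ is bounded and continuous, rather than from the continuous mapping theorem; otherwise the argument is exactly right.
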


Finally, note that the asymptotically normal distributions of  $\Vech \mathbf{S}_{h}$, $\Vec
\widehat{\Cov}(\overline{\mathbf{y}}_{_{ST}})$ and $ \Vech
\widehat{\Cov}(\overline{\mathbf{y}}_{_{ST}})$ are in terms of the parameters
$\overline{\mathbf{Y}}_{h}$, $\Vech \mathbf{S}_{h}$, $\mathfrak{M}_{h}^{4}$ and
$\mathbf{M}_{h}^{4}$; then, from \citet[iv), pp. 388-389]{r:73}, approximations of asymptotic
distributions can be obtained, making the following substitutions
\begin{equation}\label{sus}
    \overline{\mathbf{Y}}_{h} \rightarrow \overline{\mathbf{y}}_{h}, \qquad \Vech \mathbf{S}_{h}
  \rightarrow\Vech \mathbf{s}_{h},  \quad \mathfrak{M}_{h}^{4} \rightarrow \mathfrak{m}_{h}^{4}
  \quad \mbox{ and } \quad \mathbf{M}_{h}^{4} \rightarrow \mathbf{m}_{h}^{4}
\end{equation}
where
$$
  \mathbf{m}_{h}^{4} = \frac{1}{n_{h}}\mathbf{D}_{G}^{+}\left[\sum_{i = 1}^{n_{h}}
    (\mathbf{y}_{h i} - \overline{\mathbf{y}}_{h})(\mathbf{y}_{h i} - \overline{\mathbf{y}}_{h})'
    \otimes (\mathbf{y}_{h i} - \overline{\mathbf{y}}_{h})(\mathbf{y}_{h i} - \overline{\mathbf{y}}_{h})'
    \right]\mathbf{D}_{G}^{+'},
$$
and
$$
  \mathfrak{m}_{h}^{4} = \frac{1}{n_{h}}\left[\sum_{i = 1}^{n_{h}}
    (\mathbf{y}_{h i} - \overline{\mathbf{y}}_{h})(\mathbf{y}_{h i} - \overline{\mathbf{y}}_{h})'
    \otimes (\mathbf{y}_{h i} - \overline{\mathbf{y}}_{h})(\mathbf{y}_{h i} - \overline{\mathbf{y}}_{h})'
    \right].
$$

\section{Modified Pr\'ekopa's approach}\label{sec3}

Optimum allocation in multivariate stratified random sampling was proposed as the following
deterministic mathematical programming problem
\begin{equation}\label{om2}
  \begin{array}{c}
    \build{\min}{}{\mathbf{n}}\mathbf{c}'\mathbf{n} + c_{0} \\
    \mbox{subject to}\\
    \widehat{\Var}(\overline{y}_{_{ST}}^{j}) \leq v_{0}^{j}, \ \ j=1,2,\dots, G\\
    2\leq n_{h}\leq N_{h}, \ \ h=1,2,\dots, H\\
    n_{h}\in \mathbb{N},
  \end{array}
\end{equation}
where $v_{0}^{j}$ are desired precisions assigned to the variances of the sample mean $
\widehat{\Var}(\overline{y}_{_{ST}}^{j})$, $j=1,2,\dots, G$. This approach has been treated in
detail by \cite{ad81}.

From a stochastic point of view of (\ref{om2}), \citet{pre78} proposes the following chance
constraints mathematical program
\begin{equation}\label{om3}
  \begin{array}{c}
    \build{\min}{}{\mathbf{n}}\mathbf{c}'\mathbf{n} + c_{0} \\
    \mbox{subject to}\\
    \P\left(\widehat{\Var}(\overline{y}_{_{ST}}^{j}) \leq v_{0}^{j}\right) \geq p_{0}, \ \ j=1,2,\dots, G\\
    2\leq n_{h}\leq N_{h}, \ \ h=1,2,\dots, H\\
    n_{h}\in \mathbb{N},
  \end{array}
\end{equation}
where $0 \leq p_{0} \leq 1$ is a specified probability.

The present work considers the following alternative chance constraints mathematical
programming problem
\begin{equation}\label{om4}
    \begin{array}{c}
    \build{\min}{}{\mathbf{n}} \mathbf{c}'\mathbf{n}+ c_{0}\\
    \mbox{subject to}\\
    \P\left(\widehat{\Cov}(\overline{\mathbf{y}}_{_{ST}}) < \mathbf{\Delta}\right) \geq p_{0}\\
    2\leq n_{h}\leq N_{h}, \ \ h=1,2,\dots, H\\
    n_{h}\in \mathbb{N}\\
    \Vech \widehat{\Cov}(\overline{\mathbf{y}}_{_{ST}}) \build{\rightarrow}{d}{} \mathcal{N}_{k}
  \left(\E\left(\Vech \widehat{\Cov}(\overline{\mathbf{y}}_{_{ST}})\right),
  \Cov\left(\Vech \widehat{\Cov}(\overline{\mathbf{y}}_{_{ST}})\right)\right),
  \end{array}
\end{equation}
where $\mathbf{\Delta} > \mathbf{0}$ is a constant matrix.

From \citet{dgu:08}, note that $\widehat{\Cov}(\overline{\mathbf{y}}_{_{ST}})$ is an explicit
function of $\mathbf{n}$, and so it must be denoted as  as
$\widehat{\Cov}(\overline{\mathbf{y}}_{_{ST}}) \equiv
\widehat{\Cov}(\overline{\mathbf{y}}_{_{ST}}(\mathbf{n}))$. In addition, assume that
$\widehat{\Cov}(\overline{\mathbf{y}}_{_{ST}}(\mathbf{n}))$ is a positive definite matrix for
all $\mathbf{n}$, $\widehat{\Cov}(\overline{\mathbf{y}}_{_{ST}}(\mathbf{n})) > \mathbf{0}$.
Now, let $\mathbf{n_{1}}$ and $\mathbf{n_{2}}$ be two possible values of the vector
$\mathbf{n}$ and, recall that, for $\mathbf{A}$ and $\mathbf{B}$ positive definite matrices,
$\mathbf{A} > \mathbf{B} \Leftrightarrow \mathbf{A} - \mathbf{B} > \mathbf{0}$. Hence, there exists a
function $f$ such that: $f: \mathcal{S}\rightarrow \Re$,
\begin{equation}\label{citerio}
    \widehat{\Cov}(\overline{\mathbf{y}}_{_{ST}}(\mathbf{n_{1}})) <
    \widehat{\Cov}(\overline{\mathbf{y}}_{_{ST}}(\mathbf{n_{2}}))
    \Leftrightarrow
    f\left(\widehat{\Cov}(\overline{\mathbf{y}}_{_{ST}}(\mathbf{n_{1}}))\right)
    < f\left(\widehat{\Cov}(\overline{\mathbf{y}}_{_{ST}}(\mathbf{n_{2}}))\right)
\end{equation}
with $\widehat{\Cov}(\overline{\mathbf{y}}_{_{ST}}(\mathbf{n})) \in \mathcal{S}\subset
\Re^{G(G+1)/2}$ and $\mathcal{S}$ is the set of positive definite matrices.

Then, (\ref{om4}) can be reduced to the following chance constraints mathematical program
\begin{equation}\label{om5}
  \begin{array}{c}
    \build{\min}{}{\mathbf{n}} \mathbf{c}'\mathbf{n}+ c_{0}\\
    \mbox{subject to}\\
    \P\left(f\left(\widehat{\Cov}(\overline{\mathbf{y}}_{_{ST}})\right) \leq \tau\right) \geq p_{0}\\
    2\leq n_{h}\leq N_{h}, \ \ h=1,2,\dots, H\\
    n_{h}\in \mathbb{N}\\
    \Vech \widehat{\Cov}(\overline{\mathbf{y}}_{_{ST}}) \build{\rightarrow}{d}{} \mathcal{N}_{k}
  \left(\E\left(\Vech \widehat{\Cov}(\overline{\mathbf{y}}_{_{ST}})\right),
  \Cov\left(\Vech \widehat{\Cov}(\overline{\mathbf{y}}_{_{ST}})\right)\right).
  \end{array}
\end{equation}
There are many possibilities for the definition of $f(\cdot)$, see \citet{dgu:08}. In particular, it is of interest
when $f = \tr\left(\widehat{\Cov}(\overline{\mathbf{y}}_{_{ST}})\right)$. Among many others
options, it is also interesting the case when $f =
\left|\widehat{\Cov}(\overline{\mathbf{y}}_{_{ST}})\right|$ in (\ref{om5}) which is described
in detail in Section \ref{sec4}, although its application in a real problem poses some
algorithmic and numerical challenges still under study.

\section{Application}\label{sec4}

Lets consider the results of a forest survey conducted in Humboldt
County, California, originally reported in \citet{aa81}. The population was subdivided into nine strata on the basis of the timber
volume per unit area, as determined from aerial photographs. The two variables included in this
example are the basal area (BA)\footnote{In forestry terminology, 'Basal area' is the area of a
plant perpendicular to the longitudinal axis of a tree at 4.5 feet above ground.} in square
feet, and the net volume in cubic feet (Vol.), both expressed on a per acre basis. The
variances, covariances and the number of units within stratum $h$ are listed in Table 1.

\begin{table}
\caption{\small Variances, covariances and the number of units within each stratum}
\begin{center}
\begin{footnotesize}
\begin{minipage}[t]{400pt}
\begin{tabular}{ c r r r r r}
\hline\hline
\multicolumn{2}{c}{} & \multicolumn{3}{c}{Variance} \\
\cline{4-5} Stratum & $N_{h}$ &  $c_{h}$\footnote{These are simulated costs, also $c_0$ is taken
as 0}
&\hspace{.5cm} BA \hspace{.5cm} & \hspace{.5cm} Vol. \hspace{.5cm} & \hspace{.5cm}Covariance \\
\hline\hline
1 & 11 131 & 2.5 & 1 557 & 554 830 & 28 980 \\
2 & 65 857 & 3.0 & 3 575 & 1 430 600 & 61 591\\
3 & 106 936 & 1.5 & 3 163 & 1 997 100 & 72 369 \\
4 & 72 872 & 2.5 & 6 095 & 5 587 900 & 166 120\\
5 & 78 260 & 2.0 & 10 470 & 10 603 000 & 293 960 \\
6 & 51 401 & 2.0 & 8 406 & 15 828 000 & 357 300\\
7 & 24 050 & 2.5 & 20 115 & 26 643 000 & 663 300 \\
8 & 46 113 & 3.0 & 9 718 & 13 603 000 & 346 810\\
9 & 102 985 & 3.5 & 2 478 & 1 061 800 & 39 872 \\
\hline\hline
\end{tabular}
\end{minipage}
\end{footnotesize}
\end{center}
\end{table}

For this example, the matrix optimisation problem under approach (\ref{om5}) is
\begin{equation}\label{ej}
  \begin{array}{c}
  \build{\min}{}{\mathbf{n}}
   \mathbf{n}'\mathbf{c}+c_{0}\\
    \mbox{subject to}\\
    \P\left(f\left(%
    \begin{array}{c c}
      \widehat{\Var}(\overline{y}_{_{ST}}^{1}) & \widehat{\Cov}(\overline{y}_{_{ST}}^{1}, \overline{y}_{_{ST}}^{2})\\
      \widehat{\Cov}(\overline{y}_{_{ST}}^{2}, \overline{y}_{_{ST}}^{1}) & \widehat{\Var}(\overline{y}_{_{ST}}^{2}) \\
    \end{array}%
    \right) \leq \tau\right) \geq p_{0}\\
    \displaystyle\sum_{h=1}^{9}n_{h}=1000 \\
    2\leq n_{h}\leq N_{h}, \ \ h=1,\dots, 9\\
    \widehat{\Cov}(\overline{\mathbf{y}}_{_{ST}}) \build{\rightarrow}{d}{} \mathcal{N}_{2 \times 2}
  \left(\E\left(\widehat{\Cov}(\overline{\mathbf{y}}_{_{ST}})\right),
  \Cov\left(\Vec \widehat{\Cov}(\overline{\mathbf{y}}_{_{ST}})\right)\right)\\
    n_{h}\in \mathbb{N}.
  \end{array}
\end{equation}

\subsection{Solution when $f(\cdot) \equiv \tr(\cdot)$}

Observe that by (\ref{normal}), (\ref{ecyst}) and (\ref{ccyst})
$$
  \tr \Cov\left(\overline{\mathbf{y}}_{ST}\right) \sim \mathcal{N}\left(\E \left (\tr \Cov\left(\overline{\mathbf{y}}_{ST}\right)\right),
  \Var\left(\tr \Cov\left(\overline{\mathbf{y}}_{ST}\right)\right)\right)
$$
where
$$
    \E\left(\tr \widehat{\Cov}(\overline{\mathbf{y}}_{_{ST}})\right) =  \sum_{j=1}^{G}\sum_{h=1}^{H}\left(
     \frac{{{W_{h}}^{2}}}{n_{h}} - \frac{{W_{h}}}{N} \right) \frac{n_{h}}{n_{h}-1}S_{h_{j}}^{2},
$$
$$
    \Var\left(\tr \widehat{\Cov}(\overline{\mathbf{y}}_{_{ST}})\right)
    =\sum_{j=1}^{G}\sum_{h=1}^{H}\left(
    \frac{{{W_{h}}^{2}}}{n_{h}} - \frac{{W_{h}}}{N} \right)^{2} \frac{n_{h}}{(n_{h}-1)^{2}}
    \left(m_{h_{j}}^{4} - (S_{h_{j}}^{2})^{2}\right),
$$
and
$$
  m_{h_{j}}^{4} = \frac{1}{N_{h}}\left[\sum_{i = 1}^{N_{h}} \left(y_{h i}^{j} - \overline{Y}_{h}^{j}\right)^{4} \right].
$$
Standardising the function $f$ in equation (\ref{ej}), it is seen that
$$
    \mbox{P}\left[\frac{\tr \widehat{\Cov}\left(\overline{\mathbf{y}}_{ST}\right)
    -\E(\tr \widehat{\Cov}\left(\overline{\mathbf{y}}_{ST}\right))}
    {\sqrt{\Var(\tr \widehat{\Cov}\left(\overline{\mathbf{y}}_{ST}\right))}} \leq
    \frac{\tau-\E(\tr \widehat{\Cov}\left(\overline{\mathbf{y}}_{ST}\right))}
    {\sqrt{\Var(\tr \widehat{\Cov}\left(\overline{\mathbf{y}}_{ST}\right))}}\right]
    \geq p_{0},
$$
with
$$
  p_{0}= \Phi\left(\frac{\tau-\E(\tr \widehat{\Cov}\left(\overline{\mathbf{y}}_{ST}\right))}
  {\sqrt{\Var(\tr \widehat{\Cov}\left(\overline{\mathbf{y}}_{ST}\right))}}\right),
$$
where $\Phi(\cdot)$, denotes the standard normal distribution function. Let $e_{p_{0}}$
be the value of the standard normal random variable such that $\Phi(e_{p_{0}})=p_{0}$, in such
way that the inequality can be established as
$$
  \Phi\left(\frac{\tau-\E(\tr \widehat{\Cov}\left(\overline{\mathbf{y}}_{ST}\right))}
  {\sqrt{\Var(\tr \widehat{\Cov}\left(\overline{\mathbf{y}}_{ST}\right))}}\right)\geq \Phi(e_{p_{0}}),
$$
which holds only if
$$
     \frac{\tau-\E(\tr \widehat{\Cov}\left(\overline{\mathbf{y}}_{ST}\right))}
     {\sqrt{\Var(\tr \widehat{\Cov}\left(\overline{\mathbf{y}}_{ST}\right))}}\geq e_{p_{0}},
$$
or equivalently
\begin{equation}\label{102}
\E(\tr \widehat{\Cov}\left(\overline{\mathbf{y}}_{ST}\right))+ e_{p_{0}} \sqrt{\Var(\tr
\widehat{\Cov}\left(\overline{\mathbf{y}}_{ST}\right))}-\tau\leq 0.
\end{equation}
Hence, taking into account (\ref{sus}), the equivalent deterministic problem to the stochastic
mathematical programming (\ref{ej}), is given by
$$
  \begin{array}{c}
  \build{\min}{}{\mathbf{n}}
    \mathbf{n}'\mathbf{c}+c_{0}\\
    \mbox{subject to}\\
    \widehat{\E}(\tr \widehat{\Cov}\left(\overline{\mathbf{y}}_{ST}\right))+ e_{p_{0}} \sqrt{\widehat{\Var}(\tr
    \widehat{\Cov}\left(\overline{\mathbf{y}}_{ST}\right))}-\tau\leq 0\\
    \displaystyle\sum_{h=1}^{9}n_{h}=1000 \\
    2\leq n_{h}\leq N_{h}, \ \ h=1,\dots, 9\\
    n_{h}\in \mathbb{N}.
  \end{array}
$$
where
\begin{equation}\label{esp}
    \widehat{\E}\left(\tr \widehat{\Cov}(\overline{\mathbf{y}}_{_{ST}})\right) =  \sum_{j=1}^{G}\sum_{h=1}^{H}\left(
     \frac{{{W_{h}}^{2}}}{n_{h}} - \frac{{W_{h}}}{N} \right) \frac{n_{h}}{n_{h}-1}s_{h_{j}}^{2},
\end{equation}
\begin{equation}\label{varia}
    \widehat{\Var}\left(\tr \widehat{\Cov}(\overline{\mathbf{y}}_{_{ST}})\right)
    =\sum_{j=1}^{G}\sum_{h=1}^{H}\left(
    \frac{{{W_{h}}^{2}}}{n_{h}} - \frac{{W_{h}}}{N} \right)^{2} \frac{n_{h}}{(n_{h}-1)^{2}}
    \left(m_{h_{j}}^{4} - (s_{h_{j}}^{2})^{2}\right),
\end{equation}
and
\begin{equation}\label{impo}
    m_{h_{j}}^{4} = \frac{1}{n_{h}}\left[\sum_{i = 1}^{n_{h}} \left(y_{h i}^{j} - \overline{y}_{h}^{j}\right)^{4} \right].
\end{equation}
\begin{rem}\label{impo1}
Observe that the estimators $\overline{y}_{h}^{j}$, $s_{h_{j}}^{2}$ and $m_{h_{j}}^{4}$ of
$\overline{Y}_{h}^{j}$, $S_{h_{j}}^{2}$ and $M_{h_{j}}^{4}$  could initially be obtained as
\begin{description}
  \item[i)] results from a pilot (preliminary) sample or
  \item[ii)] using the corresponding values of the estimators from another set of variables, $X$'s,
  correlated to the variables $Y$'s.
\end{description}
It is important to have this in mind in the  minimisation step, because for example,
the $n_{h}$ that appears in expression (\ref{impo}), is the value of $n_{h}$ (fixed) used in the
pilot study. Same comment for the expression of the estimator $\overline{y}_{h}^{j}$ and
$s_{h_{j}}^{2}$. While the $n_{h}$'s that appear in expressions (\ref{esp}) and (\ref{varia})
are the decision variables. \qed
\end{rem}

% \begin{rem}
\subsection{Solution when $f(\cdot) \equiv |\cdot|$}

Assume the following alternative stochastic matrix mathematical programming problem
\begin{equation}\label{det1}
  \begin{array}{c}
  \build{\min}{}{\mathbf{n}}
    \mathbf{n}'\mathbf{c}+c_{0}\\
    \mbox{subject to}\\
    \P\left(f\left(\widehat{\Cov}(\overline{\mathbf{y}}_{_{ST}})
    \right) \leq \tau\right) \geq p_{0}\\
    \displaystyle\sum_{h=1}^{9}n_{h}=1000 \\
    2\leq n_{h}\leq N_{h}, \ \ h=1,\dots, 9\\
    \Vech \widehat{\Cov}(\overline{\mathbf{y}}_{_{ST}}) \build{\rightarrow}{d}{} \mathcal{N}_{G \times G}
    \left(\Vech \mathbf{0}_{G \times G},
    \Cov\left(\Vech\widehat{\Cov}(\overline{\mathbf{y}}_{_{ST}})\right)\right)\\
    n_{h}\in \mathbb{N},
\end{array}
\end{equation}
where $\widehat{\Cov}(\overline{\mathbf{y}}_{_{ST}})$
\begin{small}
$$
   = \Vech^{-1}\left[\Vech\widehat{\Cov}(\overline{\mathbf{y}}_{_{ST}})
  - \E\left(\Vech\widehat{\Cov}(\overline{\mathbf{y}}_{_{ST}})\right)\right]
$$
\end{small}
and $\Vech^{-1}$ is the inverse function of function $\Vech$.

Then, the restriction in (\ref{det1}), is
$$
  \P\left(\left|\widehat{\Cov}(\overline{\mathbf{y}}_{_{ST}})
    \right| \leq \tau\right) \geq p_{0}
$$
which for $G = 2$ and assuming that
$\widehat{\Cov}\left(\Vech\widehat{\Cov}(\overline{\mathbf{y}}_{_{ST}})\right)$ is
such that
$$
  \widehat{\Cov}\left(\Vech\widehat{\Cov}(\overline{\mathbf{y}}_{_{ST}})\right) = \mathbf{B}\otimes
  \mathbf{B} = \mathbf{N},
$$
implies that
$$
  \P\left(\left|\widehat{\Cov}(\overline{\mathbf{y}}_{_{ST}})
    \right| \leq \tau |\mathbf{N}|^{1/4}\right) \geq p_{0}
$$
where
$$
  \mathbf{N} = \sum_{h=1}^{H}\left(
    \frac{{{W_{h}}^{2}}}{n_{h}} - \frac{{W_{h}}}{N} \right)^{2} \frac{n_{h}}{(n_{h}-1)^{2}}
    \left(\mathbf{\mathfrak{m}}_{h}^{4} - \Vec \mathbf{s}_{h}\Vec' \mathbf{s}_{h}\right),
$$
$$
  \mathbf{\mathfrak{m}}_{h}^{4} = \frac{1}{n_{h}}\left[\sum_{i = 1}^{n_{h}}
    (\mathbf{y}_{h i} - \overline{\mathbf{y}}_{h})(\mathbf{y}_{h i} - \overline{\mathbf{y}}_{h})'
    \otimes (\mathbf{y}_{h i} - \overline{\mathbf{y}}_{h})(\mathbf{y}_{h i} - \overline{\mathbf{y}}_{h})'
    \right].
$$
see Remark \ref{impo1}, and
$$
  p_{0}= \Psi\left(\tau |\mathbf{N}|^{1/4}\right),
$$
with $\Psi(\cdot)$, denotes the distribution function of
$\left|\widehat{\Cov}(\overline{\mathbf{y}}_{_{ST}})\right|$, see \citet{dc:00}. Let
$r_{p_{0}}$ be the percentile of a random variable such that $\Psi(r_{p_{0}})=p_{0}$, in such
way that the inequality can be established as
$$
  \Psi\left(\left|\widehat{\Cov}(\overline{\mathbf{y}}_{_{ST}})
    \right| \leq \tau |\mathbf{N}|^{1/4}\right)\geq \Psi(r_{p_{0}}),
$$
which holds only if
$$
  \tau|\mathbf{N}|^{1/4}\geq r_{p_{0}},
$$
where the density of $Z = \widehat{\Cov}(\overline{\mathbf{y}}_{_{ST}})$ is, see
\citet{dc:00}
$$
  \frac{dG(z)}{dz} = g_{_{Z}}(z) = \frac{1}{\sqrt{2}} \exp (z)\left[1 - \erf\left(\sqrt{2z}\right)\right], \quad
  z\geq 0,
$$
where $\erf(\cdot)$ is the usual error function defined as
$$
  \erf(x) = \frac{2}{\sqrt{\pi}}\int_{0}^{x} \exp(-t^{2})dt.
$$
Thus, by (\ref{sus}), the equivalent deterministic problem to the stochastic mathematical
programming problem (\ref{det1}), is given by
$$
 \begin{array}{c}
  \build{\min}{}{\mathbf{n}}
    \mathbf{n}'\mathbf{c}+c_{0}\\
    \mbox{subject to}\\
    \tau|\mathbf{N}|^{1/4}\geq r_{p_{0}}\\
    \displaystyle\sum_{h=1}^{9}n_{h}=1000 \\
    2\leq n_{h}\leq N_{h}, \ \ h=1,\dots, 9\\
    n_{h}\in \mathbb{N},
\end{array}
$$
%\end{rem}

Table 2 includes the optimum allocation for each characteristic, BA and Vol (the second and
third rows) from a deterministic point of view. Also appear (on fourth and fifth rows) the
optimal allocations via the deterministic problem (\ref{om2}), identified in the table with the
name Pr\'ekopa, and the deterministic version of (\ref{om5}) when $f(\cdot) = \tr(\cdot)$.
These results are presented in their stochastic version in the 7-10th rows. The last three
columns show the minimum values of the individual variances for the respective optimum
allocations and the cost identified by each method. The results were computed using the
commercial software Hyper LINGO/PC, release 6.0, see \citet{w95}. The default optimisation
methods used by LINGO to solve the nonlinear integer optimisation programs are Generalised
Reduced Gradient (GRG) and branch-and-bound methods, see \citet{bss06}. Finally, note that,
for this sampling study, there is not a great discrepancy between the different methods among the sizes of the
strata. And for the multivariate solutions, the biggest cost difference appears in the
deterministic version of Pr\'ekopa's method.

\begin{table}
\caption{\small Sample sizes and estimator of variances for the different allocation
rules}
\begin{center}
\begin{minipage}[t]{400pt}
\begin{scriptsize}
\begin{tabular}{ c  c  c  c  c  c  c c c c c cc}
\hline\hline Allocation & $n_{1}$ & $n_{2}$ & $n_{3}$ & $n_{4}$ & $n_{5}$ & $n_{6}$ & $n_{7}$ &
$n_{8}$ & $n_{9}$ & $\widehat{\Var}(\overline{y}_{_{ST}}^{1})$ &
$\widehat{\Var}(\overline{y}_{_{ST}}^{2})$ & Cost\\
\hline\hline %
\multicolumn{12}{c}{\textbf{Deterministic approach}}\\
BA\footnote{\scriptsize With $v_{0}^{1} = 6$}
& 10 & 78 & 171 & 123 & 194 & 114 & 75 & 90 & 94 & 5.599 & 5766.161 & 2225.5\\
Vol\footnote{\scriptsize With $v_{0}^{2} = 6000$}
& 6 & 51 & 139 & 123 & 204 & 163 & 90 & 109 & 64 & 6.502 & 5499.996 & 2194.0\\
Pr\'ekopa\footnote{\scriptsize With $v_{0}^{1} = 6$ and With $v_{0}^{2} = 6000$}
& 10 & 78 & 171 & 123 & 194 & 114 & 75 & 90 & 94 & 5.599 & 5766.161 & 2225.5\\
$\tr \widehat{\Cov}(\overline{\mathbf{y}}_{_{ST}})$\footnote{\scriptsize With $\tau = 6000$}
& 6 & 47 & 127 & 114 & 186 & 149 & 80 & 102 & 59 & 7.071 & 5992.921 & 2014.0\\
\multicolumn{12}{c}{\textbf{Stochastic approach}\footnote{\scriptsize With $p_{0} = 0.50$}}\\
BA & 10 & 79 & 168 & 125 & 196 & 117 & 76 & 91 & 95 & 5.939 & 5693.354 & 2248.0\\
Vol & 6 & 48 & 129 & 113 & 189 & 150 & 82 & 102 & 60 & 6.988 & 5933.759 & 2034.0\\
Pr\'ekopa & 11 & 79 & 169 & 123 & 196 & 117 & 78 & 91 & 96 & 5.921 & 5680.571 & 2034.0\\
$\tr \widehat{\Cov}(\overline{\mathbf{y}}_{_{ST}})$
& 6 & 48 & 129 & 114 & 188 & 151 & 81 & 102 & 60 & 6.988 & 5933.752 & 2034.0\\
\hline\hline
\end{tabular}
\end{scriptsize}
\end{minipage}
\end{center}
\end{table}

\section{\normalsize Conclusions}

There is a vast literature on the problem of sample allocation in stratified sampling.
A natural approach considers a cost minimisation problem subject to variance restrictions.
This paper follows Pr\'ekopa's approach by setting the problem into the area of stochastic optimization.
It is recognized that this is a more realistic approach because, in general, the
population variances of the strata are unknown and therefore requires estimating
them. As a result, problem (\ref{om2}) really falls within the scope of stochastic
mathematical programming which incorporates the inherent uncertainty of estimators in a natural way.

The approach is not without its drawbacks, it is difficult to give general rules for electing
the value function $ f (\cdot) $, potentially there are an infinite number of possibilities.
In this paper we have chosen to work with $f(\mathbf{A}) = |\mathbf{A}|$ and $f(\mathbf{A}) = \text{tr}(\mathbf{A})$
which can be interpreted as a generalised variance and as an average variance respectively.
However, the responsibility for the selection
or definition of that function, lies wholly with the expert in the field of application.

\section*{Acknowledgments}

%The authors wish to thank the Editor and the anonymous reviewers for their constructive
%comments on the preliminary version of this paper.
This research work was partially supported by IDI-Spain, Grants No. FQM2006-2271 and
MTM2008-05785, supported also by CONACYT Grant CB2008 Ref. 105657. This paper was written
during J. A. D\'{\i}az-Garc\'{\i}a's stay as a visiting
professor at the Department of Probability Statistics of the Center of Mathematical Research,
Guanajuato, M\'exico.

\end{document}